\theoremstyle{plain}
\newtheorem{Theorem}{Theorem}[section]
\newtheorem{Proposition}[Theorem]{Proposition}
\newtheorem{Corollary}[Theorem]{Corollary}
\newtheorem{Lemma}[Theorem]{Lemma}
\theoremstyle{definition}
\theoremstyle{remark}
\newtheorem{Example}[Theorem]{Example}
\numberwithin{equation}{section}
\newcommand{\stackcite}[1]{\cite[\href{http://stacks.math.columbia.edu/tag/#1}{Tag~#1}]{stacks-project}}
\mathchardef\mhyphen="2D    
\newcommand\Hom{\operatorname{Hom}}
\newcommand\id{\operatorname{id}}
\DeclareMathOperator\colim{colim}
  \DeclareMathOperator{\modd}{Mod}                    
  \DeclareMathOperator{\hproj}{P}
  \DeclareMathOperator{\acyc}{Ac}
\newcommand\numGgp[1]{K_0^{\mathrm{num}}\ifthenelse{\equal{#1}{}}{}{(#1)}}      
\newcommand\rightmod[1]{{\modd\mkern-2mu\mhyphen #1}}   
\newcommand\leftmod[1]{{#1\mhyphen\mkern-2.5mu\modd}}   
\newcommand\bimod[2]{{#1\mhyphen\mkern-2.5mu\modd\mkern-2mu\mhyphen #2}}   
\DeclareMathOperator{\cok}{Coker}
\DeclareMathOperator{\homm}{Hom}
\DeclareMathOperator{\opp}{{opp}}
\DeclareMathOperator{\action}{act}
\author{Ádám Gyenge}
\address{Budapest University of Technology and Economics, Department of Algebra and Geometry, M\H{u}egyetem rakpart 3-9., 1111, Budapest, Hungary}
\email{Gyenge.Adam@ttk.bme.hu}
\title{On a sequence of Grothendieck groups}
\subjclass[2020]{Primary 18G80; Secondary 18F25}
\keywords{numerical K-group, recollement, Drinfeld quotient, Calabi-Yau triple}
\begin{document}

\begin{abstract}
We show that a well-known exact sequence in K-theory for quotients of triangulated categories descends to numerical K-groups provided that the category, the quotient and the category we take the quotient with has a numerical K-group, and if either the quotient functor preserves compactness or the K-group of the quotient is torsion-free.
\end{abstract}

\maketitle

\section{Introduction}
\label{sec:intro}

Let ${\mathcal T}$ be a triangulated category, $\mathcal{S}$ a subcategory and let ${\mathcal T}/\mathcal{S}$ be the Verdier quotient. Then there is an exact sequence of ordinary Grothendieck groups \cite[Proposition~VIII.3.1.]{grothendieck1977cohomologie}:
\begin{equation}  
\label{eq:kgrpseq}
K_0(\mathcal{S}) \xrightarrow{i^\ast} K_0({\mathcal T}) \xrightarrow{q^\ast} K_0({\mathcal T}/\mathcal{S})  \to 0
\end{equation}
where $i^{\ast}$ is induced by the embedding functor $i: \mathcal{S} \subset {\mathcal T}$ and $q^{\ast}$ is induced by the quotient functor $q: {\mathcal T} \to {\mathcal T}/\mathcal{S}$.

The numerical Grothendieck group is the quotient of the usual Grothendieck group by the kernel (or radical) of the Euler form. When exists, it often gives a more tractable invariant than the classical K-group. For example, when ${\mathcal T}$, or more precisely its dg enhancement, is smooth and proper, then the numerical Grothendieck group is a finitely generated free abelian group. 

In this paper we show that the sequence \eqref{eq:kgrpseq} descends to numerical Grothendieck groups under quite general circumstances. 

\begin{Theorem}[{Theorem~\ref{lem:numgrpseq}}]
\label{thm:main}
Let ${\mathcal T}$ be a triangulated category and $\mathcal{S}$ a strictly full triangulated subcategory of ${\mathcal T}$. Assume that the numerical Grothendieck groups of ${\mathcal T}, \mathcal{S}$ and ${\mathcal T}/\mathcal{S}$ all exist.
Suppose moreover that $q$ or, equivalently, $i$ preserves compactness. 
Then there is an exact sequence
\[\numGgp{ \mathcal{S}} \to \numGgp{ {\mathcal T}} \to \numGgp{{\mathcal T}/\mathcal{S}} \to 0\]
of numerical Grothendieck groups.
\end{Theorem}

In the main body of the paper, for simplicity, we work in the context of derived categories of DG categories. However, in the proof of Theorem~\ref{thm:main} we do not actually refer to the DG enhancements, so the claims hold in the above generality provided the necessary functors exist. 

We discuss first a recollement arising from quotients. For derived categories of (quasi-compact and quasi-separated) schemes this recollement has appeared in \cite{jorgensen2009new}. For DG categories the statements are also known to experts as they follow implicitly from \cite[Proposition~4.6~(ii)]{drinfeld2004dg}. In the background, recollements are closely related to semiorthogonal decompositions, and DG quotients induce such a decomposition under quite general circumstances. Our contribution is the extension of the methods of \cite{jorgensen2009new} to the DG setting, hence giving an alternative proof of \cite[Proposition~4.6~(ii)]{drinfeld2004dg}, a detailed description of the relevant functors as well as an application of the results on numerical K-groups.

After preliminary definitions and a review of the basic structures we show:
\begin{itemize}
	\item the existence of a recollement on the triple of derived categories $(D({\mathcal V}/{\mathcal I}), D({\mathcal V}),D({\mathcal I}))$ for any strictly full DG subcategory ${\mathcal I}$ of a DG category ${\mathcal V}$ with Drinfeld quotient ${\mathcal V}/{\mathcal I}$; 
	\item the existence of a ``half recollement''  on the triple of the compact subcategories $(D_c({\mathcal V}/{\mathcal I}), D_c({\mathcal V}),D_c({\mathcal I}))$ whenever the inclusion or the quotient functor preserves compactness. 
\end{itemize}
This setting gives one of the examples where the conditions of Theorem~\ref{thm:main} hold (see Example~\ref{ex:smoothprop}).

Another situation is the following. Iyama-Yang introduced in \cite{iyama2020quotients} the notion of a \emph{relative Serre quadruple}. Such a quadruple $({\mathcal T},\mathcal{S},S,\mathcal{M})$ consists of a $k$-linear Hom-finite Krull–Schmidt triangulated category ${\mathcal T}$, a thick subcategory $\mathcal{S}$, a triangulated equivalence $S: \mathcal{S} \to \mathcal{S}$ such that for all $A \in \mathcal{Y}$, $B \in {\mathcal T}$
there is a bifunctorial isomorphism
\[ \Hom(A,B)^\ast \cong \Hom(B,SA), \]
and a silting subcategory $\mathcal{M}$ of ${\mathcal T}$ which gives rise to a t-structure on ${\mathcal T}$ via
\[ ({\mathcal T}^{\leq 0}, {\mathcal T}^{\geq 0}) \coloneqq ((\Sigma^{>0} \mathcal{M})^{\perp},(\Sigma^{<0} \mathcal{M})^{\perp})\]
with ${\mathcal T}^{\geq 0} \subset \mathcal{S}$.
DG algebras giving rise to such quadruples were investigated in \cite{guo2011cluster}. It was shown in \cite[Theorem~2.3.3]{dannetun2024quotients} that if $S$ extends to an autoequivalence of ${\mathcal T}$, then the quotient ${\mathcal T}/\mathcal{S}$ also has a Serre functor. In the special case when $S=[n]$ this condition always holds (see \cite[Theorem~4.10]{iyama2018silting}). Then
, by suppressing the third component in the tuple, $({\mathcal T},\mathcal{S},\mathcal{M})$ is called a \emph{Calabi-Yau triple}, and the quotient ${\mathcal T}/\mathcal{S}$ is called a \emph{cluster category}.
When the extended autoequivalence $S: {\mathcal T} \to {\mathcal T}$ is a Serre functor, e.g. if ${\mathcal T}$ itself is a Calabi-Yau category, the conditions of Theorem~\ref{thm:main} hold.

\subsection*{Acknowledgements}  
The author is thankful to Clemens Koppensteiner and Timothy Logvinenko for helpful discussions. This project has received funding from the European Union's Horizon 2020 research and innovation programme under the Marie Sk\l odowska-Curie grant agreement No 891437. The project was also supported by the János Bolyai Research Scholarship of the Hungarian Academy of Sciences and by the National Research, Development and Innovation Fund of Hungary, within the Program of Excellence TKP2021-NVA-02 at the Budapest University of Technology and Economics.

\section{Preliminaries}

We start with a review of the main notions we use.

\subsection{DG-categories and DG-modules}
\label{section-dg-categories-and-dg-modules}

Let ${\mathbb{k}}$ be a field.
A \emph{DG-category} 
${\mathcal{A}}$ is a category enriched over the monoidal category ${\rightmod{{\mathbb{k}}}}$ of
complexes of ${\mathbb{k}}$-modules, that is, a category where the morphism
spaces are objects of ${\rightmod{{\mathbb{k}}}}$ and the compositions are morphisms of dg
${\mathbb{k}}$-modules.

All functors from now on are assumed to be DG. 
A (right) module $E$ over ${\mathcal{A}}$ is a functor
$E \colon {{\mathcal{A}}^{\opp}} \rightarrow {\rightmod{{\mathbb{k}}}}$. For any $a \in {\mathcal{A}}$ we write $E_a$
for the complex $E(a) \in {\rightmod{{\mathbb{k}}}}$, the \emph{fiber of $E$ over $a$}. 
We write ${\rightmod {\mathcal{A}}}$ for the DG-category of (right) ${\mathcal{A}}$-modules. 
Similarly, a left ${\mathcal{A}}$-module $F$ is a functor $F\colon {\mathcal{A}} \rightarrow
{\rightmod{{\mathbb{k}}}}$. We write $\leftidx{_a}F$ for the fiber $F(a) \in {\rightmod{{\mathbb{k}}}}$
of $F$ over $a \in A$ and ${\leftmod {\mathcal{A}}}$ for the DG-category of left 
${\mathcal{A}}$-modules.  

Given another DG-category ${\mathcal{B}}$, an
${{\mathcal{A}}\mhyphen {\mathcal{B}}}$-bimodule $M$ is an ${{\mathcal{A}}^{\opp}} \otimes_{\mathbb{k}} {\mathcal{B}}$-module. 
For any $a \in {\mathcal{A}}$ and $b \in {\mathcal{B}}$ we write $\leftidx{_{a}}{M}{} \in {\rightmod {\mathcal{B}}}$
for the fiber $M(a,-)$ of $M$ over $a$, 
$M_b \in {\leftmod {\mathcal{A}}}$ for the fiber $M(-,b)$ of $M$ over $b$, 
and $\leftidx{_{a}}{M}{_{b}} \in {\rightmod{{\mathbb{k}}}}$ for the fiber of $M$ over $(a,b)$. 
We write ${\bimod{\mathcal{A}}{\mathcal{B}}}$ for the DG-category of ${\mathcal{A}}$-${\mathcal{B}}$-bimodules. 
The categories ${\rightmod {\mathcal{A}}}$ and ${\leftmod {\mathcal{A}}}$ of right and left ${\mathcal{A}}$-modules
can therefore be considered as the categories of ${\mathbb{k}}$-${\mathcal{A}}$- and 
${\mathcal{A}}$-${\mathbb{k}}$-bimodules. For any DG-category ${\mathcal{A}}$, we write ${\mathcal{A}}$ for 
the diagonal ${{\mathcal{A}}\mhyphen {\mathcal{A}}}$-bimodule defined by 
$\leftidx{_a}{\mathcal{A}}_b = \homm_{\mathcal{A}}(b,a)$ for all $a,b \in {\mathcal{A}}$ with 
morphisms of ${\mathcal{A}}$ and ${\mathcal{B}}$ acting by pre- and post-composition 
respectively: 
\begin{equation*}
{\mathcal{A}}(\alpha \otimes \beta) = 
(-1)^{\deg(\beta) \deg(-)} \alpha \circ (-) \circ \beta,
\quad \quad 
\forall\; \alpha \in \homm_{\mathcal{A}}(a,a'), \beta \in \homm_{\mathcal{A}}(b',b).
\end{equation*}

DG-bimodules over DG-categories admit 
a closed symmetric monoidal structure. That is, they admit 
a tensor product and an internal $\Hom$. Given three DG-categories 
${\mathcal{A}}$, ${\mathcal{B}}$ and ${\mathcal{C}}$, there exist functors 
\begin{equation*}
(-)\otimes_{\mathcal{B}}(-) \colon {\bimod{\mathcal{A}}{\mathcal{B}}} \otimes {\bimod{\mathcal{B}}{\mathcal{C}}} \rightarrow {\bimod{\mathcal{A}}{\mathcal{C}}},
\end{equation*}
\begin{equation*}
\homm_{\mathcal{B}}(-,-) \colon {\bimod{\mathcal{A}}{\mathcal{B}}} \otimes {\bimod{\mathcal{C}}{\mathcal{B}}} \rightarrow {\bimod{\mathcal{C}}{\mathcal{A}}},
\end{equation*}
\begin{equation*}
\homm_{\mathcal{B}}(-,-) \colon {\bimod{\mathcal{B}}{\mathcal{A}}} \otimes {\bimod{\mathcal{B}}{\mathcal{C}}} \rightarrow {\bimod{\mathcal{A}}{\mathcal{C}}},
\end{equation*}
where  
\[ M \otimes_{\mathcal{B}} N = 
\cok (M \otimes_{\mathbb{k}} {\mathcal{B}} \otimes_{\mathbb{k}} N \xrightarrow{\action \otimes \id -
\id \otimes \action} M \otimes_{\mathbb{k}} N), \]
while $\leftidx{_c}{\left(\homm_{\mathcal{B}}(M,N)\right)}_a = \homm_{\mathcal{B}}(\leftidx{_{a}}{M}{},\leftidx{_{c}}{N}{})$ 
for $M,N$ with right ${\mathcal{B}}$-action, and 
$\leftidx{_a}{\left(\homm_{\mathcal{B}}(M,N)\right)}_c = \homm_{\mathcal{B}}(M_c,N_a)$
for $M,N$ with left ${\mathcal{B}}$-action, 
cf.~\cite[Section~2.1.5]{AnnoLogvinenko-SphericalDGFunctors} for details. 

\subsection{The derived category of a DG-category}

Let ${\mathcal{A}}$ be a DG-category. Its \emph{homotopy category} $H^0({\mathcal{A}})$ is 
defined as the ${\mathbb{k}}$-linear category whose objects are the same as those of ${\mathcal{A}}$ and whose $\homm$ spaces are $H^0(-)$ 
of the $\homm$ complexes of ${\mathcal{A}}$. 

The category $H^0({\rightmod {\mathcal{A}}})$ has a natural structure of a triangulated
category defined levelwise in ${\rightmod{{\mathbb{k}}}}$. That is, the homotopy category 
$H^0({\rightmod{{\mathbb{k}}}})$ of complexes of ${\mathbb{k}}$-modules has a well-known
triangulated structure, and we define one on $H^0({\rightmod {\mathcal{A}}})$
by using that structure in the fibers over each $a \in {\mathcal{A}}$. 
A DG-category ${\mathcal{A}}$ is \emph{pretriangulated} if $H^0({\mathcal{A}})$ is a 
triangulated subcategory of $H^0({\rightmod {\mathcal{A}}})$ under the Yoneda embedding.

Given a DG-category ${\mathcal{A}}$ with a full subcategory ${\mathcal{C}} \subset {\mathcal{A}}$
there exists the \emph{Drinfeld quotient} ${\mathcal{A}} / {\mathcal{C}}$ (see \cite{drinfeld2004dg}). It is constructed by formally adding 
for every $c \in {\mathcal{C}}$ a degree $-1$ contracting homotopy $\xi_c$ with 
$d\xi_c = \id_c$ . When ${\mathcal{A}}$ and ${\mathcal{C}}$ are pretriangulated, one recovers
the Verdier quotient as $H^0({\mathcal{A}}/{\mathcal{C}}) \simeq H^0({\mathcal{A}})/H^0({\mathcal{C}})$. 

An ${\mathcal{A}}$-module $E$ is \emph{acyclic} if $E_a$ is an acyclic complex
for all $a \in {\mathcal{A}}$. We denote by $\acyc {\mathcal{A}}$ the full subcategory of 
${\rightmod {\mathcal{A}}}$ consisting of acyclic modules. A morphism of ${\mathcal{A}}$-modules is
a \emph{quasi-isomorphism} if it is a quasi-isomorphism levelwise
in ${\rightmod{{\mathbb{k}}}}$ for every $a \in {\mathcal{A}}$. The derived category $D({\mathcal{A}})$ is the 
localisation of $H^0({\rightmod {\mathcal{A}}})$ by quasi-isomorphisms, constructed as
the Verdier quotient $H^0({\rightmod {\mathcal{A}}})/\acyc {\mathcal{A}}$. 

The derived category can be constructed on the DG level. 
An ${\mathcal{A}}$-module $P$ is \emph{$h$-projective} (resp.~\emph{$h$-flat}) 
if $\homm_{\mathcal{A}}(P,C)$ (resp. $P \otimes_{\mathcal{A}} C$) is an acyclic complex
of ${\mathbb{k}}$-modules for any acyclic $C \in {\rightmod {\mathcal{A}}}$ (resp. $C \in {\leftmod {\mathcal{A}}}$). 
We denote by ${\hproj({\mathcal{A}})}$ the full subcategory of ${\rightmod {\mathcal{A}}}$ consisting of
$h$-projective modules. It follows from the definition that
in ${\hproj({\mathcal{A}})}$ every quasi-isomorphism is a homotopy equivalence, 
and therefore we have $D({\mathcal{A}}) \simeq H^0({\hproj({\mathcal{A}})})$. Alternatively, 
one uses Drinfeld quotients: $D({\mathcal{A}}) = H^0({\rightmod {\mathcal{A}}} / \acyc A)$. 

An object $a$ of triangulated category ${\mathcal T}$ is compact if
$\homm_{\mathcal T}(a,-)$ commutes with infinite direct sums. We write $D_c({\mathcal{A}})$
for the full subcategory of $D({\mathcal{A}})$ consisting of all compact objects. 
We say that an ${\mathcal{A}}$-module $E$ is \emph{perfect} if the class $E \in D({\mathcal{A}})$ is in $D_c({\mathcal{A}})$.

Let ${\mathcal{A}}$ and ${\mathcal{B}}$ be DG categories and let $M$ be an
${\mathcal{A}}$-${\mathcal{B}}$-bimodule. 
We say that $M$ is ${\mathcal{A}}$-perfect (resp. ${\mathcal{B}}$-perfect) 
if it is perfect levelwise in ${\mathcal{A}}$ (resp. ${\mathcal{B}}$). That is, 
$\leftidx{_{a}}{M}{}$ (resp. $M_b$) is a perfect module for all $a \in {\mathcal{A}}$
(resp. $b \in {\mathcal{B}}$). Similarly, for other properties of modules
such as $h$-projective, $h$-flat, or representable. 

Let ${\mathcal{A}}$ be a DG-category. We say that ${\mathcal{A}}$ is \em smooth \rm if the diagonal bimodule ${\mathcal{A}}$ is perfect as an ${\mathcal{A}}$-${\mathcal{A}}$-bimodule. We say that ${\mathcal{A}}$ is
\em proper \rm 
if the 
total cohomology of each of its $\homm$-complexes is
finitely-generated and $D({\mathcal{A}})$ is compactly generated. See  
\cite[Section~2.2]{ToenVaquie-ModuliOfObjectsInDGCategories} for further
details on these two notions. 

\subsection{Restriction and extension of scalars}
\label{section-restriction-and-extension-of-scalars}

Let ${\mathcal{A}}$ and ${\mathcal{B}}$ be two DG-categories and let $M$ be an
${\mathcal{A}}$-${\mathcal{B}}$-bimodule. Moreover, let ${\mathcal{A}}'$ and ${\mathcal{B}}'$ be another two
DG-categories and let $f\colon {\mathcal{A}}' \rightarrow {\mathcal{A}}$ and $g\colon
{\mathcal{B}}' \rightarrow {\mathcal{B}}$ be DG-functors. Define the \em restriction of
scalars of $M$ along $f$ and $g$ \rm to be the ${\mathcal{A}}'$-${\mathcal{B}}'$-bimodule
$\leftidx{_f}{M}{_g}$ defined as $M \circ (f \otimes_{\mathbb{k}} g)$. 
In particular, for any $a \in {\mathcal{A}}$ and $b \in {\mathcal{B}}$ we have
$\leftidx{_a}{(\leftidx{_f}{M}{_g})}{_b} = 
\leftidx{_{f(a)}}{M}{_{g(b)}}$. We write $\leftidx{_f}{M}$ and 
$M_g$ for $\leftidx{_f}{M}{_{\id}}$ and $\leftidx{_{\id}}{M}{_g}$,
respectively. 

Let ${\mathcal{A}}$ and ${\mathcal{B}}$ be two DG-categories and let $f: {\mathcal{A}} \rightarrow {\mathcal{B}}$ 
be a DG-functor. We then have the following three induced functors:
\begin{enumerate}
\item The \em extension of scalars \rm functor
\begin{equation*}
f^*\colon {\rightmod {\mathcal{A}}} \rightarrow {\rightmod {\mathcal{B}}}, 
\end{equation*}
is defined to be $(-) \otimes_{\mathcal{A}} \leftidx{_f}{{\mathcal{B}}}$. 
\item The \em restriction of scalars \rm functor
\begin{equation*}
	f_*\colon {\rightmod {\mathcal{B}}} \rightarrow {\rightmod {\mathcal{A}}}, 
\end{equation*}
is defined to be $(-) \otimes_{\mathcal{B}} {\mathcal{B}}_f$. It sends each $E \in {\rightmod {\mathcal{B}}}$ to its
restriction $E_f \in {\rightmod {\mathcal{A}}}$, and therefore sends acyclic modules to
acyclic modules. 
\item The \em twisted extension of scalars \rm functor
\begin{equation*}
f^!\colon {\rightmod {\mathcal{A}}} \rightarrow {\rightmod {\mathcal{B}}}, 
\end{equation*}
is defined to be $\homm_{{\mathcal{A}}}({\mathcal{B}}_f,-)$. 
\end{enumerate}
By Tensor-$\homm$ adjunction $(f^*, f_*)$ and $(f_*, f^!)$ 
are adjoint pairs of DG-functors. Since $f_*$ preserves acyclic modules 
it follows that $f^*$ preserves $h$-projective modules and $f^!$
preserves $h$-injective modules. Eventually, we obtain the derived functors (whose derivedness we omit to denote in the rest of the paper):
\[ f^\ast:=Lf^*\colon D({\mathcal{A}}) \rightarrow D({\mathcal{B}}),\quad f_*\colon D({\mathcal{B}}) \rightarrow D({\mathcal{A}}), \quad    f^!:=Rf^!\colon D({\mathcal{A}}) \rightarrow D({\mathcal{B}}). \]
Again these form an adjoint triple.


\section{Recollement}
\label{sec:recoll}

The notion of a recollement of triangulated categories was introduced in \cite[Section 1.4]{deligne1983faisceaux}. Let $\mathcal{S}$, ${\mathcal T}$ and $\mathcal{Q}$ be triangulated categories. 
A recollement is a diagram of triangulated functors
  \[
\begin{tikzpicture}
	\node (v1) at (0,0)  {$\mathcal{Q}$};
	\node (v2) at (3,0)  {${\mathcal T}$};
		\node (v3) at (6,0)  {$\mathcal{S}$};
	
	\draw [->] (v1) to node[above] {$q_{\ast}$} (v2);
	\draw [->,bend right=30] (v2) to node[above] {$\scriptstyle{q^{\ast}}$} (v1);
	\draw [->,bend left=30] (v2) to node[below] {$\scriptstyle{q^{!}}$} (v1);
	\draw [->] (v2) to node[above] {$i_{\ast}$} (v3);
	\draw [->,bend right=30] (v3) to node[above] {$\scriptstyle{i^{\ast}}$} (v2);
	\draw [->,bend left=30] (v3) to node[below] {$\scriptstyle{i^{!}}$} (v2);
\end{tikzpicture}
\]
such that 
\begin{enumerate}
\item $(q^{\ast},q_{\ast}), (q_{\ast},q^{!}),(i^{\ast},i_{\ast}),(i_{\ast},i^{!})$ are adjoint pairs;
\item $q_{\ast}$, $i^{!}$ and $i^{!}$ are fully faithful;
\item the composite of two functors in each row is zero.
\item there are two canonical triangles for each object $X$ of ${\mathcal T}$:
\[ i^{\ast} i_{\ast} X \to X \to q_{\ast}q^{\ast} X \to i^{\ast} i_{\ast} X[1] \]
and
\[ q^{!} q_{\ast} X \to X \to i^{!}i_{\ast} X \to q^{!} q_{\ast}  X[1]. \]
\end{enumerate}

We show that a recollement arises from taking a Drinfeld quotient of dg enhanced triangulated categories.
Let  ${\mathcal I} $ be a strictly full dg subcategory of a DG category ${\mathcal V}$. Denote by $i:  {\mathcal I} \to {\mathcal V}$ the inclusion functor and by $q: {\mathcal V} \to {\mathcal V}/{\mathcal I}$ the quotient functor to the Drinfeld quotient.
\begin{Theorem}
\label{thm:1}
Let ${\mathcal I}$ be a strictly full dg subcategory of ${\mathcal V}$. Then there is a recollement
  \[
\begin{tikzpicture}
	\node (v1) at (0,0)  {$D({\mathcal V}/{\mathcal I})$};
	\node (v2) at (3,0)  {$D({\mathcal V})$};
		\node (v3) at (6,0)  {$D({\mathcal I})$};
	
	\draw [->] (v1) to node[above] {$q_{\ast}$} (v2);
	\draw [->,bend right=30] (v2) to node[above] {$\scriptstyle{q^{\ast}}$} (v1);
	\draw [->,bend left=30] (v2) to node[below] {$\scriptstyle{q^{!}}$} (v1);
	\draw [->] (v2) to node[above] {$i_{\ast}$} (v3);
	\draw [->,bend right=30] (v3) to node[above] {$\scriptstyle{i^{\ast}}$} (v2);
	\draw [->,bend left=30] (v3) to node[below] {$\scriptstyle{i^{!}}$} (v2);
\end{tikzpicture}.
\]
In particular $i^{\ast}$, $i^{!}$ and $q_{\ast}$ are fully faithful functors.
\end{Theorem}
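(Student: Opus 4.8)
The plan is to exhibit $D(\basecat/\mathcal{I})$ as a localising subcategory of $D(\basecat)$ with unusually many adjoints, and $D(\mathcal{I})$ as the matching Verdier quotient, and then to read off the recollement. All six functors and the four adjunctions are already available from Section~\ref{section-restriction-and-extension-of-scalars}: $(q^{*},q_{*},q^{!})$ is an adjoint triple between $D(\basecat)$ and $D(\basecat/\mathcal{I})$, and $(i^{*},i_{*},i^{!})$ one between $D(\mathcal{I})$ and $D(\basecat)$. So it remains to verify the defining properties of a recollement: (a) $i^{*}$, $i^{!}$ and $q_{*}$ are fully faithful; (b) $i_{*}q_{*}=0$; and (c) for each $X\in D(\basecat)$ the triangles $i^{*}i_{*}X\to X\to q_{*}q^{*}X\to i^{*}i_{*}X[1]$ (counit of $(i^{*},i_{*})$, then unit of $(q^{*},q_{*})$) and $q_{*}q^{!}X\to X\to i^{!}i_{*}X\to q_{*}q^{!}X[1]$ (counit of $(q_{*},q^{!})$, then unit of $(i_{*},i^{!})$).

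I would first dispose of the $\mathcal{I}$-side, which is formal. Because $\mathcal{I}$ is a \emph{full} DG subcategory, restricting the diagonal bimodule $\basecat$ along $i$ in both variables gives exactly the diagonal $\mathcal{I}$-$\mathcal{I}$-bimodule $\mathcal{I}$; since $i^{*}=(-)\otimes_{\mathcal{I}}{}_{i}\basecat$ and $i_{*}=(-)_{i}$, the unit $\id\to i_{*}i^{*}$ and the counit $i_{*}i^{!}\to\id$ are isomorphisms (after deriving, using an $h$-flat resp.\ $h$-injective resolution), whence $i^{*},i^{!}$ are fully faithful and $i_{*}i^{*}\cong\id\cong i_{*}i^{!}$. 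For $i_{*}q_{*}=0$: for $P\in D(\basecat/\mathcal{I})$ and $c\in\mathcal{I}$ the fibre $(i_{*}q_{*}P)_{c}$ carries the operator $P(\xi_{c})$, where $\xi_{c}\in\Hom^{-1}_{\basecat/\mathcal{I}}(c,c)$ with $d\xi_{c}=\id_{c}$ is the adjoined contracting homotopy; $P$ being a DG functor, $P(\xi_{c})$ contracts $(i_{*}q_{*}P)_{c}$, so this complex is acyclic. The same computation gives $q^{*}h^{\basecat}_{i(c)}\cong 0$, so the representable $\basecat$-modules of objects of $\mathcal{I}$ lie in $\ker q^{*}$; and as $i^{*}$ sends the representable $\mathcal{I}$-module $h_{c}$ to $h^{\basecat}_{i(c)}$ and preserves coproducts, its essential image is the localising subcategory $\mathcal{D}_{\mathcal{I}}:=\mathrm{Loc}_{D(\basecat)}\langle h^{\basecat}_{i(c)}:c\in\mathcal{I}\rangle\subseteq\ker q^{*}$, i.e.\ $i^{*}\colon D(\mathcal{I})\isoto\mathcal{D}_{\mathcal{I}}$.

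The substantive point is the quotient side: one uses that $q^{*}\colon D(\basecat)\to D(\basecat/\mathcal{I})$ is a Verdier localisation with kernel precisely $\mathcal{D}_{\mathcal{I}}$ — the derived-module form of the universal property of the Drinfeld quotient \cite{Drinfeld-DGQuotientsOfDGCategories}. Granting this, $q^{*}$ has the fully faithful right adjoint $q_{*}$, with essential image $\mathcal{D}_{\mathcal{I}}^{\perp}$; since the $h^{\basecat}_{i(c)}$ are compact in $D(\basecat)$ we have $\mathcal{D}_{\mathcal{I}}^{\perp}=\{N:N_{c}\text{ acyclic }\forall c\in\mathcal{I}\}=\ker i_{*}$, hence $\img q_{*}=\mathcal{D}_{\mathcal{I}}^{\perp}=\ker i_{*}$. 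Now $\mathcal{D}_{\mathcal{I}}$ is generated by compact objects of $D(\basecat)$, which is itself compactly generated (by its representables); so by Neeman's localisation theory $\mathcal{D}_{\mathcal{I}}^{\perp}$ is closed under coproducts, ${}^{\perp}(\mathcal{D}_{\mathcal{I}}^{\perp})=\mathcal{D}_{\mathcal{I}}$, and $\mathcal{D}_{\mathcal{I}}^{\perp}\simeq D(\basecat)/\mathcal{D}_{\mathcal{I}}$ is compactly generated; consequently the inclusion $\mathcal{D}_{\mathcal{I}}^{\perp}\hookrightarrow D(\basecat)$ has a left adjoint (Bousfield localisation away from $\mathcal{D}_{\mathcal{I}}$) and, by Brown representability, also a right adjoint. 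Since $\mathcal{D}_{\mathcal{I}}^{\perp}$ is thus a localising subcategory whose inclusion admits both adjoints, there is a recollement $(\mathcal{D}_{\mathcal{I}}^{\perp},D(\basecat),D(\basecat)/\mathcal{D}_{\mathcal{I}}^{\perp})$, and $D(\basecat)/\mathcal{D}_{\mathcal{I}}^{\perp}\simeq{}^{\perp}(\mathcal{D}_{\mathcal{I}}^{\perp})=\mathcal{D}_{\mathcal{I}}$; transporting it along the equivalences $q_{*}\colon D(\basecat/\mathcal{I})\isoto\mathcal{D}_{\mathcal{I}}^{\perp}$ and $i^{*}\colon D(\mathcal{I})\isoto\mathcal{D}_{\mathcal{I}}$ gives the asserted diagram once one checks the transported functors are the named ones — routine bookkeeping, since $q_{*}q^{*}$ is the reflection onto $\mathcal{D}_{\mathcal{I}}^{\perp}$ (using $q_{*}$ fully faithful) and $i^{*}i_{*}$ the coreflection onto $\mathcal{D}_{\mathcal{I}}$ (using $i^{*}$ fully faithful), which forces (b) and pins down the gluing triangles (c). The final assertion of the theorem is then part of (a).

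I expect the sole genuine obstacle to be this Drinfeld-quotient input: identifying $D(\basecat/\mathcal{I})$ with $\mathcal{D}_{\mathcal{I}}^{\perp}$, equivalently with $D(\basecat)/\mathcal{D}_{\mathcal{I}}$. If one does not wish to quote it, one must produce the triangle $i^{*}i_{*}X\to X\to q_{*}q^{*}X\to i^{*}i_{*}X[1]$ directly on DG modules — showing that for an $h$-projective $\basecat$-module $X$ the comparison map to its ``$\mathcal{I}$-local replacement'' $q_{*}q^{*}X$ has cofibre in $\mathcal{D}_{\mathcal{I}}$ — which is the technical heart and the place where the explicit shape of $\Hom_{\basecat/\mathcal{I}}$ and the hypothesis that $\mathcal{I}$ is strictly full are actually used. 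Everything else is formal manipulation of adjoint triples and compactly generated triangulated categories.
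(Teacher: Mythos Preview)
Your argument is correct and follows the same overall strategy as the paper: show that $i^{*}$ embeds $D(\mathcal{I})$ fully faithfully into $D(\basecat)$ with $i_{*}$ coproduct-preserving, obtain a recollement on $(\ker i_{*},D(\basecat),D(\mathcal{I}))$, and then identify $\ker i_{*}=D(\mathcal{I})^{\perp}$ with $D(\basecat/\mathcal{I})$ using the Drinfeld quotient. The only difference is packaging: the paper black-boxes the recollement step into a citation of Miyachi and the identification $D(\mathcal{I})^{\perp}\simeq D(\basecat/\mathcal{I})$ into Krause, whereas you unpack both via an explicit Bousfield-localisation\,/\,Brown-representability argument and a direct computation with the contracting homotopies $\xi_{c}$.
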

\begin{proof}
We adapt the proof of \cite[Proposition on page 2]{jorgensen2009new} to our setting.
As $i^{\ast}$ is the inclusion of $D({\mathcal I})$ into $D({\mathcal V})$ (see  \cite[Proposition~1.15]{lunts2010uniqueness}), it is fully faithful and triangulated. Therefore, $i_{\ast}$ is also triangulated by \cite[Lemma 5.3.6]{neeman2001triangulated}. As $i^{\ast}$ sends compact objects to compact objects (see for example the proof of \cite[Proposition~1.15]{lunts2010uniqueness}), $i_{\ast}$ respects set indexed coproducts by \cite[Theorem 5.1]{neeman1996grothendieck}. This is exactly the setting of \cite{miyachi1991localization} which gives a recollement on the triple $(\mathrm{Ker}(i_{\ast}), D({\mathcal V}),D({\mathcal I}))$. 

For $E$ to be in $\mathrm{Ker}(i_{\ast})$ means that $i_{\ast} E=0$. This holds precisely when $\Hom_{D({\mathcal I})}(I,i_{\ast}E)$ for all $I \in {\mathcal I}$. But
\[ \Hom_{D({\mathcal I})}(I,i_{\ast}E)=\Hom_{D({\mathcal V})}(i^{\ast}I,E). \]
As $i^{\ast}$ is fully faithful, this means that $\mathrm{Ker}(i_{\ast})=D({\mathcal I})^{\perp}$, the right orthogonal to $D({\mathcal I})$ in $D({\mathcal V})$. Then \cite[Proposition~4.9.1~(5)]{krause2008localization} shows that the composition \[D({\mathcal I})^{\perp} \to D({\mathcal V}) \to D({\mathcal V})/D({\mathcal I})=D({\mathcal V}/{\mathcal I})\] is an equivalence.

\end{proof}

\begin{Theorem} 
\label{thm:2}
Let ${\mathcal I}$ be a strictly full dg subcategory of ${\mathcal V}$. The following conditions are equivalent:
\begin{enumerate}
\item ${\mathcal V}_i$ is a perfect ${\mathcal I}$-module.
\item $i_{\ast}$ preserves compact objects.
\item 	$q_{\ast}$ preserves compact objects.
\item $({\mathcal V}/{\mathcal I})_q$ is a perfect ${\mathcal V}$-module.
\end{enumerate}
If this is the case, there is a ``half recollement''
 \[
\begin{tikzpicture}
	\node (v1) at (0,0)  {$D_c({\mathcal V}/{\mathcal I})$};
	\node (v2) at (3,0)  {$D_c({\mathcal V})$};
	\node (v3) at (6,0)  {$D_c({\mathcal I})$};
	
	\draw [->] (v1) to node[above] {$\scriptstyle{q_{\ast}}$} (v2);
	\draw [->,bend right=30] (v2) to node[above] {$\scriptstyle{q^{\ast}}$} (v1);
	\draw [->] (v2) to node[above] {$\scriptstyle{i_{\ast}}$} (v3);
	\draw [->,bend right=30] (v3) to node[above] {$\scriptstyle{i^{\ast}}$} (v2);
\end{tikzpicture}
\]
\end{Theorem}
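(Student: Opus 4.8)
The plan is to establish the four-way equivalence first and then deduce the half recollement as a formal consequence of Theorem~\ref{thm:1}.

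For the equivalence of (1)--(4), I would run a cyclic chain of implications. The equivalence $(1) \Leftrightarrow (2)$ is essentially the definition unwound through the functor $i_{\ast}$: since $i^{\ast}$ is the fully faithful inclusion of $D(\mathcal{I})$ into $D(\basecat)$ with right adjoint $i_{\ast} = \homm_{\basecat}(\basecat_i, -)$, the functor $i_{\ast}$ preserves compactness precisely when $\basecat_i$ is a compact, i.e.\ perfect, $\mathcal{I}$-module---this is the standard criterion that a right adjoint preserves coproducts (equivalently, by the compactly generated setting, preserves compact objects) iff the left adjoint sends compact generators to compact objects, combined with $i^{\ast}$ applied to the generator of $D(\mathcal{I})$ being exactly $\basecat_i$ viewed appropriately. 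Dually, $(3) \Leftrightarrow (4)$ works the same way for the quotient functor: $q_{\ast}$ is right adjoint to $q^{\ast}$ with $q_{\ast} = (-) \otimes (\basecat/\mathcal{I})_q$ up to the identification, so $q_{\ast}$ preserves compactness iff $(\basecat/\mathcal{I})_q$ is perfect over $\basecat$. The bridge $(2) \Leftrightarrow (3)$ I would get from the recollement itself: in the recollement of Theorem~\ref{thm:1}, for any object $X \in D(\basecat)$ there is a canonical triangle $q_{\ast}q^{\ast}X \to X \to i^{\ast}i_{\ast}X \to$ (the gluing triangle, with $i^{\ast}i_{\ast}$ the localization onto $D(\mathcal{I})$ and $q_{\ast}q^{\ast}$ the colocalization). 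Restricting to compact generators, $i^{\ast}$ and $q^{\ast}$ always preserve compactness (being left adjoints whose right adjoints preserve coproducts), so the compactness of $X$ forces a two-out-of-three game: if $i_{\ast}$ preserves compactness then $i^{\ast}i_{\ast}X$ is compact, hence so is $q_{\ast}q^{\ast}X$, and feeding this through the fully faithful $q_{\ast}$ (whose essential image is $D(\mathcal{I})^{\perp}$, a localizing subcategory) one extracts that $q^{\ast}X$ is compact in $D(\basecat/\mathcal{I})$; applied to a generator this yields $(3)$, and symmetrically $(3) \Rightarrow (2)$.

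Once the equivalence is in hand, the half recollement is formal. Assuming the equivalent conditions hold, $i_{\ast}$ and $q_{\ast}$ preserve compactness, and $i^{\ast}$, $q^{\ast}$ do so unconditionally; so all four functors restrict to the compact subcategories, giving the displayed diagram. The adjunctions $(q^{\ast}, q_{\ast})$ and $(i^{\ast}, i_{\ast})$ restrict as well. What remains is to check the defining properties of a (half) recollement at the compact level: $q_{\ast}$ and $i^{\ast}$ are still fully faithful (full faithfulness passes to full subcategories), $i^{\ast}i_{\ast} \cong 0$ on the nose by restriction, and the image of $q_{\ast}$ in $D_c(\basecat)$ equals $\krn(i_{\ast})\cap D_c(\basecat)$---the inclusion $\subseteq$ is immediate, and the reverse follows because a compact object killed by $i_{\ast}$ lies in $D(\mathcal{I})^{\perp} = \img(q_{\ast})$ by Theorem~\ref{thm:1}, and is then the image under $q_{\ast}$ of a compact object precisely because $q^{\ast}$ is a quasi-inverse of $q_{\ast}$ on this subcategory and preserves compactness. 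Finally, the triangle $q_{\ast}q^{\ast} \to \id \to i^{\ast}i_{\ast} \to$ restricts to $D_c(\basecat)$ since all three terms are now compact, which is exactly the gluing datum of the half recollement.

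The main obstacle I anticipate is the careful bookkeeping in $(2) \Leftrightarrow (3)$: one must be sure that the gluing triangle of Theorem~\ref{thm:1} is the correct one and that $q^{\ast}$, $i^{\ast}$ really do preserve compactness without any hypothesis (this rests on $q_{\ast}$, $i_{\ast}$ preserving arbitrary coproducts, which was already used in the proof of Theorem~\ref{thm:1} via \cite{neeman1996grothendieck}, so it is available). A secondary subtlety is the precise identification of $q_{\ast}$ with $(-)\otimes(\basecat/\mathcal{I})_q$ and of $i_{\ast}$ with $\homm(\basecat_i,-)$ at the derived level, so that ``perfect bimodule'' translates exactly into ``preserves compactness''; this is where the hypotheses (1) and (4) are really being used, and it should follow from the descriptions of $f_{\ast}$, $f^{!}$, $f^{\ast}$ in Section~\ref{section-restriction-and-extension-of-scalars} together with the fact that $D(\basecat)$ and $D(\mathcal{I})$ are compactly generated.
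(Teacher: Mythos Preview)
Your overall strategy matches the paper's: prove $(1)\Leftrightarrow(2)$ and $(3)\Leftrightarrow(4)$ via the Neeman criterion relating preservation of compacts by a left adjoint to preservation of coproducts by its right adjoint, then bridge $(2)\Leftrightarrow(3)$, and finally read off the half recollement by restriction. The half-recollement paragraph is fine.

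There is, however, a genuine error in your $(1)\Leftrightarrow(2)$ argument. You write ``$i_{\ast} = \homm_{\basecat}(\basecat_i,-)$'', but in the paper's conventions $i_{\ast}$ is restriction of scalars, $i_{\ast} = (-)\otimes_{\basecat}\basecat_i$; the Hom-functor you wrote down does not even type-check (the right $\mathcal I$-structure of $\basecat_i$ is not a $\basecat$-structure). The correct argument, which is what the paper does, uses the \emph{other} adjunction $(i_{\ast},i^{!})$: by \cite[Theorem~5.1]{neeman1996grothendieck}, $i_{\ast}$ preserves compacts iff its right adjoint $i^{!}=R\homm_{\mathcal I}(\basecat_i,-)$ preserves coproducts, which happens iff $\basecat_i$ is a perfect $\mathcal I$-module. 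Your sentence invoking ``the standard criterion that a right adjoint preserves coproducts \dots\ iff the left adjoint sends compact generators to compact objects, combined with $i^{\ast}$ applied to the generator\dots'' is applying the criterion to the wrong adjoint pair; as stated it proves nothing about $i_{\ast}$. The fix is simply to switch to $(i_{\ast},i^{!})$ as above, and likewise $(q_{\ast},q^{!})$ for $(3)\Leftrightarrow(4)$.

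For $(2)\Leftrightarrow(3)$ you give a self-contained argument via the gluing triangle, whereas the paper just cites \cite[Lemma~2.2]{chen2012recollements}. Your idea is correct, but the exposition wobbles: the step ``one extracts that $q^{\ast}X$ is compact'' is vacuous (that is always true), and what you actually need is that the objects $q^{\ast}X$, for $X$ running over compact generators of $D(\basecat)$, form a set of compact generators of $D(\basecat/\mathcal I)$, so that showing $q_{\ast}q^{\ast}X$ compact for those $X$ forces $q_{\ast}$ to preserve all compacts. That generation statement follows from the adjunction $(q^{\ast},q_{\ast})$ together with $q^{\ast}q_{\ast}\simeq\id$, and should be stated explicitly.
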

\begin{proof}
First we show the equivalence of (1) and (2). 
Our setting satisfies the assumptions for Brown representability \cite[Theorem 4.1]{neeman1996grothendieck}. By \cite[Theorem 5.1]{neeman1996grothendieck}, $i_{\ast}: D({\mathcal V}) \to D({\mathcal I})$ preserves compactness if and only if its right adjoint $i^{!}(-)=R\Hom_{{\mathcal I}}({\mathcal V}_i,-)$ preserves arbitrary direct sums. As the $h$-injective resolution of a direct sum is the direct sum of the $h$-injective resolution of the summands, $R\Hom_{{\mathcal I}}({\mathcal V}_i,-)$ preserves arbitrary direct sums if and only if $\Hom_{{\mathcal I}}({\mathcal V}_i,-)$ preserves arbitrary direct sums. By definition, this happens if ${\mathcal V}_i$ is a perfect ${\mathcal I}$-module.

The proof of the equivalence of (3) and (4) is similar. 

Finally, the equivalence of (2) and (3) follows from \cite[Lemma 2.2]{chen2012recollements}, because the derived category of a DG category is compactly generated.
\end{proof}

Recall that a functor $h\colon {\mathcal T} \to Vect$ from an Ext-finite triangulated category ${\mathcal T}$ to the category of vector spaces is called cohomological if it takes exact triangles to long exact sequences. It is of finite type if for every object $A \in {\mathcal T}$, $\mathrm{dim}\oplus_n h(A[n]) < \infty$. For every object $A \in {\mathcal T}$ the functors $h_A(\cdot):=\Hom(A,\cdot)$ and $h^A(\cdot):=\Hom(\cdot,A)$ are finite dimensional cohomological functors by definition. Covariant (resp. contravariant) cohomological functors isomorphic to $h_A$ (resp. $h^A$) are called representable. The category ${\mathcal T}$ is called left (resp. right) saturated if every covariant (resp. contravariant) cohomological functor of finite type is representable. Right saturatedness gives a sufficient condition for the existence of the half recollement of Theorem~\ref{thm:2}.

\begin{Lemma} Let ${\mathcal I}$ be a strictly full dg subcategory of ${\mathcal V}$.
\begin{enumerate}
	\item If $D_c({\mathcal V})$ is right saturated, then $q_{\ast}$ preserves compact objects.
	\item If $D_c({\mathcal I})$ is right saturated, then $i_{\ast}$ preserves compact objects.
\end{enumerate}	
\end{Lemma}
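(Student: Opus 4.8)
The plan is to exploit right saturation exactly as it is meant to be used --- to extract, from a finite-type cohomological functor, a representing \emph{compact} object --- and to combine this with the elementary fact that restriction of scalars preserves coproducts. Part~(1) asks that $q_\ast$ preserve compactness and part~(2) that $i_\ast$ do so; both are instances of the statement below, taken with $f=q$ and $f=i$ respectively (and by Theorem~\ref{thm:2} the two conclusions are in any case equivalent, which is why the asymmetric hypotheses are harmless).

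Let $f\colon\A\to\B$ be one of the two DG-functors, so $(\A,\B)=(\basecat,\basecat/\mathcal I)$ for $f=q$ and $(\A,\B)=(\mathcal I,\basecat)$ for $f=i$; I want $f_\ast\colon D(\B)\to D(\A)$ to preserve compactness. First, $f^\ast$ preserves compactness, because its right adjoint $f_\ast$ is a restriction of scalars and so commutes with arbitrary coproducts (\cite[Theorem~5.1]{neeman1996grothendieck}). Now fix $b\in\B$, let $h_b\in D_c(\B)$ be the associated representable module, and consider the cohomological functor
\[
\Phi\colon D_c(\A)^{\mathrm{op}}\longrightarrow\mathrm{Vect},\qquad \Phi(Z)=\Hom_{D(\A)}(Z,f_\ast h_b)\cong\Hom_{D(\B)}(f^\ast Z,h_b),
\]
the isomorphism coming from the adjunction $f^\ast\dashv f_\ast$. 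The steps are: (i) show $\Phi$ is of finite type; (ii) apply right saturation of $D_c(\A)$ to obtain $Y\in D_c(\A)$ and a natural isomorphism $\Phi\cong\Hom_{D(\A)}(-,Y)|_{D_c(\A)}$; (iii) evaluate this isomorphism at $Y$ --- the image of $\id_Y$ is a morphism $Y\to f_\ast h_b$ which, by naturality, induces an isomorphism on $\Hom$ out of every compact object of $D(\A)$, so that its cone receives no morphisms from compacts and hence vanishes because $D(\A)$ is compactly generated --- concluding that $f_\ast h_b\cong Y$ is compact. Since the representables $h_b$ generate $D_c(\B)$ and $f_\ast$ is triangulated and additive, $f_\ast$ then carries $D_c(\B)$ into the thick closure of $\{f_\ast h_b\}\subseteq D_c(\A)$, so $f_\ast$ preserves compactness.

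The hypotheses of the two parts enter at step~(ii): there one needs $D_c(\A)$ to be right saturated, which is $D_c(\basecat)$ in part~(1) and $D_c(\mathcal I)$ in part~(2). (For part~(2) there is also the more structural route: $i^\ast$ exhibits $D_c(\mathcal I)$ as a thick subcategory of $D_c(\basecat)$, a right-saturated full triangulated subcategory is right-admissible by Bondal and Kapranov \cite{bondal1990representable}, and the right adjoint so produced is the corestriction of $i_\ast$.) The step I expect to carry the real content, and the one that delimits the scope of the lemma, is step~(i): here $f^\ast Z$ and $h_b$ are both compact objects of $D(\B)$, so $\bigoplus_n\Hom_{D(\B)}(f^\ast Z[n],h_b)$ is finite-dimensional provided $D_c(\B)$ is Ext-finite. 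I would fold this into the standing conventions of the section --- right saturation is in any case only defined for Ext-finite triangulated categories, and in the situations of Section~4 all of $D_c(\basecat)$, $D_c(\mathcal I)$, $D_c(\basecat/\mathcal I)$ are Ext-finite --- after which steps~(i)--(iii) go through and both parts follow.
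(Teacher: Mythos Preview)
Your proposal is correct and follows essentially the same route as the paper: for a compact $b$ on the target side, consider the contravariant cohomological functor $Z\mapsto\Hom(f^\ast Z,b)$ on $D_c(\A)$, invoke right saturation to represent it by a compact object, and identify that object with $f_\ast b$. The paper phrases the last step as ``uniqueness of adjoints'' (the representing objects assemble into a right adjoint $q'_\ast$ of $q^\ast|_{D_c}$, hence coincide with $q_\ast$), whereas you make the identification explicit via the map $Y\to f_\ast h_b$ and compact generation --- your step~(iii) is in fact the content hidden behind the paper's one-line appeal to uniqueness. Your remark that finite type requires Ext-finiteness of $D_c(\B)$ is also well taken; the paper leaves this implicit.
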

\begin{proof}
We just prove (1). As adjoint functors are unique up to an isomorphism, it is enough to show that $q^{\ast}: D_c({\mathcal V}) \to D_c({\mathcal V}/{\mathcal I})$ has a right adjoint. For any $b \in D_c({\mathcal V}/{\mathcal I})$ consider the functor
\[F_b(a)=\Hom(q^{\ast}(a),b).\] 
This is a contravariant cohomological functor of finite type. Hence, it is representable by an object $q'_{\ast}(b) \in D_c({\mathcal V})$:
\[ F_b(a)=\Hom(a,q'_{\ast}(b)). \]
As $\Hom$ is functorial in both variables, the correspondence $b \mapsto q'_{\ast}(b)$ is also functorial. By the construction, $q'_{\ast}$ is right adjoint to $q^{\ast}$, so it must agree with the restriction of $q_{\ast}$ to $D_c({\mathcal V}/{\mathcal I})$.
\end{proof}

\section{An exact sequence for numerical Grothendieck groups}\label{subsec:prelim_grothendieck_group}\label{subsec:heisenberg_algebra_cat}

Consider a dg category ${\mathcal V}$.
The Grothendieck group $K_0({{\mathcal V}}) = K_0(D_c({\mathcal V}))$ of ${\mathcal V}$ comes equipped with the \emph{Euler} (or \emph{Mukai}) \emph{pairing}
\[
    [a],\, [b] \mapsto \langle [a],[b]\rangle_{\chi} \coloneqq \chi(\Hom_{\mathcal V}(a,b)) =
    \sum_{n\in \mathbb{Z}} (-1)^n  H^n\Hom_{\mathcal V}(a,b).
\]

Recall that a covariant autoequivalence $S: D_c({\mathcal V}) \to D_c({\mathcal V})$ is a Serre functor, if there is a bifunctorial isomorphism
\[ \Hom(A,B)^\ast \cong \Hom(B,SA),\; A,B \in D_c({\mathcal V}). \]

\begin{Example}
\label{ex:smoothprop}
\begin{enumerate}
\item	If ${\mathcal V}$ is smooth and proper, then $D_c({\mathcal V})$ has a Serre functor  \cite[Section~3]{BondalKapranov:1989:RepresentableFunctorsSerreFunctors}.
\item Let $A$ be a finite-dimensional ${\mathbb{k}}$-algebra of finite global dimension, and ${\mathcal V}$ the DG category of right (or left) $A$-modules. Then $D_c({\mathcal V})$, the bounded derived category of finite-dimensional $A$-modules has a Serre functor.
\end{enumerate}
\end{Example}

\begin{Proposition}\label{prop:tabuada_Ggp}
 Suppose that $D_c({\mathcal V})$ has a Serre functor.
    \begin{enumerate}
        \item For every pair of objects $a,\, b$ of $D_c({\mathcal V})$
            \[
                \langle [a],[b]\rangle_{\chi} = \langle [b],[Sa]\rangle_{\chi} =  \langle [S^{-1}b], [a]\rangle_{\chi}
            \]
            where $S$ is the Serre functor  on $D_c({\mathcal V})$.
        \item The left and right kernels of $\chi$ agree.
    \end{enumerate}
\end{Proposition}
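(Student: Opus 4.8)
The plan is to deduce both parts from two elementary facts: that a finite-dimensional vector space and its dual have the same dimension, and that an autoequivalence is essentially surjective. For the first equality in (1), recall that $H^n\Hom_\basecat(a,b)=\Hom_{D_c(\basecat)}(a,b[n])$. Applying the defining isomorphism of the Serre functor $S$ with arguments $a$ and $b[n]$, followed by the shift adjunction $\Hom_{D_c(\basecat)}(b[n],Sa)\cong\Hom_{D_c(\basecat)}(b,Sa[-n])$, gives a $\kk$-linear isomorphism
\[
\Hom_{D_c(\basecat)}(a,b[n])^\ast\cong\Hom_{D_c(\basecat)}(b,Sa[-n]),
\]
hence $\dim H^n\Hom_\basecat(a,b)=\dim H^{-n}\Hom_\basecat(b,Sa)$ for every $n\in\ZZ$. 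Since the reindexing $n\mapsto -n$ leaves the sign $(-1)^n$ unchanged, summing with alternating signs yields $\langle[a],[b]\rangle_\chi=\langle[b],[Sa]\rangle_\chi$. The second equality in (1) is then formal: applying this identity with $a$ replaced by $S^{-1}b$ and $b$ replaced by $a$, and using $S(S^{-1}b)\cong b$, gives $\langle[S^{-1}b],[a]\rangle_\chi=\langle[a],[b]\rangle_\chi$.

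For (2), extend the identities of (1) bilinearly to $K_0(\basecat)$. If $x\in K_0(\basecat)$ lies in the left kernel of $\chi$, then $0=\langle x,[b]\rangle_\chi=\langle[S^{-1}b],x\rangle_\chi$ for every object $b$ of $D_c(\basecat)$; since $S$ is an autoequivalence, $S^{-1}b$ ranges over all objects, whose classes generate $K_0(\basecat)$, so $\langle y,x\rangle_\chi=0$ for all $y\in K_0(\basecat)$, i.e.\ $x$ lies in the right kernel. The reverse inclusion is symmetric, using instead $\langle[a],x\rangle_\chi=\langle x,[Sa]\rangle_\chi$ and essential surjectivity of $S$. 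Hence the left and right kernels of $\chi$ coincide.

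I do not expect a genuine obstacle here; the argument is sign bookkeeping plus essential surjectivity of $S$. The two points requiring a little care are tracking the grading shift $[n]$ through the Serre duality isomorphism (and the resulting harmless identity $(-1)^{-n}=(-1)^n$), and the fact that the Euler pairing is only defined once $D_c(\basecat)$ is Ext-finite, so that the alternating sums are finite and one may use $\dim V=\dim V^\ast$; this is an implicit standing hypothesis, satisfied in particular whenever $\basecat$ is smooth and proper, the main case in which the Serre functor is guaranteed to exist. I would also carry out the entire computation with $\Hom_{D_c(\basecat)}(-,-[n])$ rather than the DG $\hom$-complexes, since the Serre duality isomorphism is asserted only for morphism spaces in the triangulated category.
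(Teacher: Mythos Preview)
Your argument is correct and is precisely the standard one. The paper itself does not write out a proof: it simply observes that the smooth and proper case is Lemma~4.25 and Proposition~4.24 of Tabuada's \emph{Noncommutative Motives}, and that the argument there uses nothing beyond the existence of a Serre functor. What you have written is exactly that argument made explicit, so there is no substantive difference in approach---you supply the details the paper delegates to the reference. Your remark about Ext-finiteness being an implicit standing hypothesis is well taken; the Euler pairing is only meaningful under that assumption, and the paper tacitly relies on it as well.
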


\begin{proof}
For ${\mathcal V}$ smooth and proper this statement was proved in Lemma~4.25 and Proposition~4.24 of \cite{tabuada2015noncommutative}. But the proof in fact only uses the existence of the Serre functor.
\end{proof}

\begin{Corollary} Suppose that ${\mathcal V}$ is a DG category such that the left and right kernels of $\chi$ agree (e.g. if $D_c({\mathcal V})$ has a Serre functor). Then there is a well-defined \emph{numerical Grothendieck group} $\numGgp{{\mathcal V}} \coloneqq K_0({\mathcal V})/\ker(\chi)$ of ${{\mathcal V}}$.
\end{Corollary}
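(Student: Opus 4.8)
The plan is to deduce the Corollary directly from Proposition~\ref{prop:tabuada_Ggp}, since the only thing that needs checking is that the quotient $K_0(\basecat)/\ker(\chi)$ is a well-defined abelian group carrying an induced (nondegenerate) pairing. First I would recall that $\chi$ is a $\ZZ$-bilinear form on the abelian group $K_0(\basecat) = K_0(D_c(\basecat))$; this bilinearity is immediate from the additivity of $n \mapsto \sum_n (-1)^n \dim H^n \Hom_\basecat(a,b)$ on exact triangles in each variable, which is exactly the content of the Euler pairing being well-defined on $K_0$. Given a bilinear form on an abelian group, its left kernel $\{x : \langle x, -\rangle_\chi = 0\}$ and right kernel $\{y : \langle -, y\rangle_\chi = 0\}$ are each subgroups, so the quotient by either is automatically a group; the subtlety the Corollary is flagging is merely that ``$\ker(\chi)$'' should be unambiguous, i.e.\ that these two subgroups coincide.

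The key step is therefore an appeal to Proposition~\ref{prop:tabuada_Ggp}(2): under the standing hypothesis that $D_c(\basecat)$ admits a Serre functor $S$, the left and right kernels of $\chi$ agree, so there is a single well-defined subgroup $\ker(\chi) \subseteq K_0(\basecat)$, and the numerical Grothendieck group $\numGgp{\basecat} = K_0(\basecat)/\ker(\chi)$ is then a bona fide quotient abelian group. I would also remark, using part (1) of the same Proposition, that $\chi$ descends to a pairing on $\numGgp{\basecat}$ which is nondegenerate by construction — this is really the point of the definition, even though it is not literally asserted in the Corollary's statement, so I would include it as a closing sentence for completeness.

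There is essentially no obstacle here: the Corollary is a formal consequence of the Proposition, and the ``proof'' is a two-line bookkeeping argument. If anything, the only point requiring a word of care is confirming that ``the kernel'' is the right notion to quotient by — one should note that $\ker(\chi)$ as a subgroup of $K_0(\basecat)$ is saturated enough that nothing is lost, but since we only claim a group (not, say, a free or torsion-free group), even that is automatic. Concretely I would write:

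\begin{proof}
The Euler pairing $\langle -, -\rangle_\chi$ is $\ZZ$-bilinear on $K_0(\basecat) = K_0(D_c(\basecat))$, since for a distinguished triangle $a' \to a \to a'' \to a'[1]$ the long exact sequence of cohomology of $\Hom_\basecat(-, b)$ (resp.\ $\Hom_\basecat(a, -)$) gives $\chi(\Hom_\basecat(a,b)) = \chi(\Hom_\basecat(a',b)) + \chi(\Hom_\basecat(a'',b))$ (resp.\ the analogous identity in the second variable). Hence its left kernel $\{x \in K_0(\basecat) : \langle x, y\rangle_\chi = 0 \text{ for all } y\}$ and right kernel $\{y \in K_0(\basecat) : \langle x, y\rangle_\chi = 0 \text{ for all } x\}$ are each subgroups of $K_0(\basecat)$. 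By Proposition~\ref{prop:tabuada_Ggp}(2) these two subgroups coincide, so there is a well-defined subgroup $\ker(\chi) \subseteq K_0(\basecat)$, and the quotient $\numGgp{\basecat} = K_0(\basecat)/\ker(\chi)$ is a well-defined abelian group. Moreover $\langle -, -\rangle_\chi$ descends to a pairing on $\numGgp{\basecat}$, which is nondegenerate by construction.
\end{proof}
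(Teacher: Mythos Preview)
Your proposal is correct and matches the paper's intent: the Corollary is stated there without proof, as an immediate consequence of Proposition~\ref{prop:tabuada_Ggp}(2), and your write-up simply spells out that bookkeeping (bilinearity of $\chi$, coincidence of left and right kernels, hence an unambiguous $\ker(\chi)$). There is nothing to add or correct.
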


\begin{Proposition}[{\cite[Theorem 1.2]{tabuada2016noncommutative}, \cite[Theorem 1.2]{tabuada2017finite}}]
    The numerical Grothendieck group $\numGgp{{\mathcal V}}$ is a finitely generated free abelian group.
\end{Proposition}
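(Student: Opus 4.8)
The plan is to establish separately that $\numGgp{\basecat}$ is \emph{torsion-free} and that it is \emph{finitely generated}; the first is elementary, the second is where the cited theorems of Tabuada enter.

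Torsion-freeness uses only Proposition~\ref{prop:tabuada_Ggp}. Suppose $x \in K_0(\basecat)$ maps to a torsion element of $\numGgp{\basecat}$, say $nx \in \ker(\chi)$ for some integer $n \geq 1$. Then for every $y \in K_0(\basecat)$ we have $n\langle x, y\rangle_{\chi} = \langle nx, y\rangle_{\chi} = 0$ in $\ZZ$, and therefore $\langle x, y\rangle_{\chi} = 0$; since by Proposition~\ref{prop:tabuada_Ggp}(2) the left and right kernels of $\chi$ agree, this forces $x \in \ker(\chi)$, i.e.\ $x$ maps to $0$ in $\numGgp{\basecat}$. Hence $\numGgp{\basecat}$ is torsion-free, and the Euler pairing it carries is non-degenerate.

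For finite generation I would first record a purely formal reduction: a torsion-free abelian group $G$ carrying a non-degenerate $\ZZ$-valued pairing is finitely generated free as soon as it has finite rank. Indeed, non-degeneracy yields an injection $G \hookrightarrow \Hom_{\ZZ}(G,\ZZ)$; and if $G$ has finite rank $r$, then restriction along a maximal free subgroup $\ZZ^r \subseteq G$ embeds $\Hom_{\ZZ}(G,\ZZ)$ into $\ZZ^r$ (any homomorphism $G \to \ZZ$ that vanishes on $\ZZ^r$ factors through the torsion group $G/\ZZ^r$, hence is zero), so $\Hom_{\ZZ}(G,\ZZ)$, and with it $G$, is finitely generated free. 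It thus suffices to show that $\numGgp{\basecat}$ has finite rank, equivalently that $\numGgp{\basecat}\otimes\mathbb{Q}$ is finite-dimensional.

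This finiteness is the genuine obstacle, and it is exactly what the cited theorems supply. They are proved for $\basecat$ \emph{smooth and proper} --- which is the setting of the intended applications, and which in fact covers the general statement, since for a proper DG category the existence of a Serre functor on the perfect derived category is equivalent to smoothness (the forward implication being the Example above). The mechanism is the noncommutative Hirzebruch--Riemann--Roch theorem: for smooth proper $\basecat$ the Chern character $\mathrm{ch}\colon K_0(\basecat) \to HP_0(\basecat)$ to periodic cyclic homology satisfies $\langle a,b\rangle_{\chi} = \langle \mathrm{ch}(a),\mathrm{ch}(b)\rangle$ for a pairing on $HP_\ast(\basecat)$ that is perfect by smoothness and properness, while $HP_\ast(\basecat)$ is finite-dimensional by properness; refining the Chern character through an integral invariant which is itself finitely generated --- Blanc's topological K-theory in characteristic zero, or $\ell$-adic and crystalline realizations in general --- then bounds the rank, and indeed exhibits $\numGgp{\basecat}$ as a subgroup of a finitely generated free abelian group. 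Carrying this out from scratch would mean redeveloping the theory of noncommutative motives, so in practice the proof consists in checking that $\basecat$ sits in the smooth proper setting of \cite[Theorem~1.2]{tabuada2016noncommutative} and \cite[Theorem~1.2]{tabuada2017finite} --- immediate for the Heisenberg and Fock categories of the intended applications --- and then invoking those results.
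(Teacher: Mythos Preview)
The paper does not prove this Proposition: the bracketed citation in the statement is the entire argument, and the result is used as a black box thereafter. Your proposal thus does more than the paper --- you separate out torsion-freeness (correct and elementary; the appeal to Proposition~\ref{prop:tabuada_Ggp}(2) is in fact unnecessary, since bilinearity over $\ZZ$ alone already shows that $nx \in \ker\chi$ forces $x \in \ker\chi$), reduce finite generation to finite rank via the standard embedding into the $\ZZ$-dual, and then sketch the noncommutative Hirzebruch--Riemann--Roch mechanism behind Tabuada's theorems. As an expository unpacking of the citation this is accurate.

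One caveat: you bridge the gap between the paper's running hypothesis (a Serre functor on $D_c(\basecat)$) and Tabuada's hypothesis (smooth and proper) by asserting that for proper $\basecat$ the two coincide. The implication smooth $\Rightarrow$ Serre is the Example above, but the converse is not clear: admitting a Serre functor is a priori weaker than saturatedness, and it is saturatedness that is known to characterise smoothness in the proper case. The paper itself leaves this hypothesis-matching implicit, so you are already being more careful than the author, but that specific equivalence claim would need a reference or further argument.
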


If $f\colon {\mathcal{A}} \to {\mathcal{B}}$ is a functor of dg categories, then $f^*\colon D_c(\mathcal A) \to D_c(\mathcal B)$ induces a morphism $K^0({\mathcal{A}}) \to K^0({\mathcal{B}})$.

\begin{Lemma}
\label{lem:indmapnum}
    Let ${\mathcal{A}}, {\mathcal{B}}$ be DG-categories whose numerical Grothendieck group exist, and let $f\colon {\mathcal{A}} \to {\mathcal{B}}$ be a DG-functor.
    Then $f^*$ induces a map
    \[
        f^\ast\colon \numGgp{ {\mathcal{A}}} \to \numGgp{ {\mathcal{B}}}.
    \]
    Further, if $f$ preserves compactness, then it induces a map
    \[
        f_\ast\colon \numGgp{ {\mathcal{B}}} \to \numGgp{ {\mathcal{A}}}.
    \]
\end{Lemma}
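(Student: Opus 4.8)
The statement asserts that for smooth and proper DG categories $\A$ and $\B$ and a DG-functor $f\colon \A \to \B$, the map $f^\ast\colon K_0(\A) \to K_0(\B)$ descends to numerical Grothendieck groups, and similarly for $f_\ast$ when it preserves compactness. By Proposition~\ref{prop:tabuada_Ggp}, a smooth and proper DG category has a Serre functor on its compact derived category, so $\numGgp{\A}$ and $\numGgp{\B}$ are defined. Since $\numGgp{\A} = K_0(\A)/\ker(\chi_\A)$ by definition, proving that $f^\ast$ descends amounts to showing $f^\ast(\ker\chi_\A) \subseteq \ker\chi_\B$; by part (2) of Proposition~\ref{prop:tabuada_Ggp} the left and right kernels agree, so it suffices to control one of them.

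\textbf{The argument for $f^\ast$.} First I would recall that $f^\ast = Lf^\ast$ and $f_\ast$ form an adjoint pair $(f^\ast, f_\ast)$ on the derived categories, which restricts to an adjoint pair on compact objects precisely when $f_\ast$ preserves compactness; but even without that, $f^\ast$ always preserves compactness (since $f_\ast$ preserves acyclicity, hence $f^\ast$ preserves $h$-projectives and perfect modules). Now suppose $[a] \in \ker\chi_\A$, i.e.\ $\langle [a],[x]\rangle_\chi = 0$ for all $x \in D_c(\A)$, equivalently (using that left and right kernels agree) $\langle [x],[a]\rangle_\chi = 0$ for all $x$. I want $\langle f^\ast[a], [y]\rangle_\chi = 0$ for all $y \in D_c(\B)$. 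The key point is the adjunction at the level of Euler pairings: if $f_\ast$ preserves compactness, then for $y \in D_c(\B)$ we have $\Hom_\B(f^\ast a, y) \cong \Hom_\A(a, f_\ast y)$ as complexes of $\kk$-modules (the adjunction isomorphism is an isomorphism of complexes, not merely after taking cohomology), hence $\langle f^\ast[a],[y]\rangle_\chi = \langle [a], f_\ast[y]\rangle_\chi = 0$ since $f_\ast y \in D_c(\A)$. This handles both the induced map $f^\ast$ and, symmetrically using the other adjunction together with the agreement of left and right kernels, the induced map $f_\ast$. When $f_\ast$ does \emph{not} preserve compactness, I still need $f^\ast$ to descend: here I use instead the adjoint pair $(f_\ast, f^!)$, so for $y \in D_c(\B)$ one has $\Hom_\B(f^\ast a, y)$ — but $f^\ast$ is left adjoint to $f_\ast$, so the cleaner route is to note that for $[a] \in \ker\chi_\A$ and arbitrary $y$, pick $x = f_\ast y$ which lies in $D(\A)$ but perhaps not $D_c(\A)$; this is exactly the gap, and it is why the hypothesis is needed only for $f_\ast$, not $f^\ast$. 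To close it, I would instead argue: $D_c(\B)$ is generated (as a thick subcategory) by the image of $D_c(\A)$ together with... no — rather, use that $\chi_\B$ on $D_c(\B)$ can be computed against a generating set, and since $\A,\B$ are smooth and proper, $f^\ast$ of a generator of $D_c(\A)$ together with $D_c(\mathcal{I})$-type considerations... Actually the cleanest fix: $f^\ast\colon D_c(\A) \to D_c(\B)$ has image whose right orthogonal's behavior is controlled, but more simply, by smooth-properness $f$ always admits $f_! \colon D_c(\B) \to D_c(\A)$ (the functor $f^! = R\homm_\A(\B_f,-)$ restricts to compacts since $\B_f$ is $\A$-perfect when $\A$ is smooth — this needs checking). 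Then $(f^\ast, f^!)$ is \emph{not} adjoint, but $(f_\ast, f^!)$ is; instead use that $f^! \cong S_\A^{-1} \circ f_\ast \circ S_\B$ on compacts via Serre duality — wait, that's $f^\ast$, not $f_\ast$. I would use: Serre duality gives $f^\ast$ a right adjoint $q^! = S_\A \circ f^? \circ S_\B^{-1}$ restricted to compacts automatically, mirroring the Lemma before Section~4. So $f^\ast\colon D_c(\A) \to D_c(\B)$ always has a right adjoint $g\colon D_c(\B) \to D_c(\A)$ by saturatedness (smooth proper implies $D_c$ is saturated), and then $\langle f^\ast[a],[y]\rangle_\chi = \langle [a], g[y]\rangle_\chi = 0$ with $g[y] \in D_c(\A)$.

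\textbf{Main obstacle.} The real content — beyond the formal adjunction bookkeeping — is ensuring that the relevant adjoints restrict to the compact derived categories \emph{without} assuming $f_\ast$ preserves compactness when proving the $f^\ast$ statement. The resolution is that smooth and proper DG categories have saturated compact derived categories (every cohomological functor of finite type is representable, by the Serre functor together with Ext-finiteness), so $f^\ast\colon D_c(\A) \to D_c(\B)$ automatically admits a right adjoint valued in $D_c(\A)$ — this is exactly the mechanism of the Lemma preceding this section. Granting that, the Euler-pairing adjunction $\langle f^\ast[a],[y]\rangle_\chi = \langle [a], (\text{right adjoint})[y]\rangle_\chi$ immediately gives $f^\ast(\ker\chi_\A)\subseteq \ker\chi_\B$, and the second half follows by the same argument applied to $f_\ast$ (whose right adjoint is $f^!$, which preserves compactness because its left adjoint $f_\ast$ is assumed to). I would structure the write-up as: (i) invoke the Serre functors from Proposition~\ref{prop:tabuada_Ggp}; (ii) observe the relevant adjoints land in compacts; (iii) run the one-line Euler-pairing computation using that the adjunction isomorphisms are isomorphisms of $\homm$-complexes, so Euler characteristics match; (iv) conclude via part (2) of Proposition~\ref{prop:tabuada_Ggp} that it does not matter whether we test against left or right kernels.
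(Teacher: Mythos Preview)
Your eventual strategy for the $f^\ast$ statement is correct but genuinely different from the paper's. The paper does \emph{not} invoke saturatedness to manufacture a compact-valued right adjoint. Instead it works directly with the $(f^\ast,f_\ast)$ adjunction: for $a\in\ker\chi_\A$ and $b\in D_c(\B)$ one has $\chi(f^\ast a,b)=\chi(a,f_\ast b)$; then, since $f_\ast b$ need not be compact, the paper writes $f_\ast b$ as a homotopy colimit of compact objects $c_i\in D_c(\A)$, uses compactness of $a$ to pass $\Hom(a,-)$ through the colimit, and argues that the resulting expression vanishes because each $\chi(a,c_i)=0$. Your approach (saturatedness of $D_c(\A)$ under the smooth-and-proper hypothesis $\Rightarrow$ the contravariant functor $\Hom_\B(f^\ast(-),y)$ is representable by some $g(y)\in D_c(\A)$ $\Rightarrow$ $\chi(f^\ast a,y)=\chi(a,g(y))=0$) is arguably cleaner once one grants saturatedness, and it sidesteps the somewhat delicate interaction between Euler characteristics and filtered colimits that the paper's argument requires. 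The paper's route, on the other hand, is more elementary in that it does not appeal to the representability theorem for smooth proper categories.

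For the $f_\ast$ statement the two arguments coincide: both use the $(f^\ast,f_\ast)$ adjunction together with the fact that $f^\ast$ always preserves compactness. Your parenthetical about $f^!$ preserving compactness is a detour you do not actually need here---the relevant adjoint is the \emph{left} adjoint $f^\ast$ of $f_\ast$, not its right adjoint $f^!$.

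One organisational remark: your write-up should be streamlined considerably. The meandering through several false starts (Serre-twisting $f^!$, trying to realise a right adjoint as $S_\A\circ f^?\circ S_\B^{-1}$, etc.) obscures the clean argument you eventually land on; keep only steps (i)--(iv) of your final paragraph.
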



\begin{proof}
    As the Euler pairing is defined in terms of Hom-spaces in the derived category, it is compatible with standard derived adjunctions. 
    For any $a \in D_c({\mathcal{A}})$ and for any $b \in D_c({\mathcal{B}})$ we have
    \begin{gather*}
        \chi(f^\ast(a), b) =
        \sum (-1)^i \dim \Hom^i_{D_c({\mathcal{B}})}(f^\ast(a), b) = 
        \sum (-1)^i \dim \Hom^i_{D_c({\mathcal{A}})}(a,f_\ast(b) ).
    \end{gather*}
    While $f_*(b)$ is not necessarily compact, it is a (homotopy) colimit of compact objects $c_i \in D_c({\mathcal{A}})$ in the sense of \stackcite{090Z}.
    As $a$ is compact, $\Hom_{D_c({\mathcal{A}})}(a, -)$ commutes with homotopy colimits.
    Thus the above is equal to
    \[
        \sum (-1)^i \dim \colim \Hom^i_{D_c({\mathcal{A}})}(a, c_i).
    \]
    Computing the Euler characteristic is compatible with taking direct sums.
    Thus for $a \in \ker \chi$ this expression vanishes, and $f^*(a)$ is again in $\ker \chi$ as required.

    The second statement follows directly from the adjunction
    \[
        \Hom_{D_c({\mathcal{A}})}(a, f_\ast(b)) = 
        \Hom_{D_c({\mathcal{B}})}(f^\ast(a), b).
        \qedhere
    \]
 \end{proof}


\begin{Lemma} 
\label{lem:kermaps}
	Let ${\mathcal V}$ a dg category and let ${\mathcal I}$ be a strictly full dg subcategory of ${\mathcal V}$. 
 Assume that the numerical Grothendieck group of ${\mathcal V}$, ${\mathcal I}$ and ${\mathcal V}/{\mathcal I}$ exist. Then:
\begin{enumerate}
	\item $i^\ast(K_0({\mathcal I})) \cap \mathrm{Ker}(\chi_{{\mathcal V}}) = i^\ast(\mathrm{Ker}(\chi_{{\mathcal I}}))$;
	\item $\mathrm{Ker}(\chi_{{\mathcal V}/{\mathcal I}})\supseteq q^\ast(\mathrm{Ker}(\chi_{{\mathcal V}}))$.
\end{enumerate}
If moreover $q$ or, equivalently, $i$ preserves compactness, then 
\begin{enumerate}
	\item[(3)] $\mathrm{Ker}(\chi_{{\mathcal V}/{\mathcal I}})=  q^\ast(\mathrm{Ker}(\chi_{{\mathcal V}}))$.
\end{enumerate}
\end{Lemma}
\begin{proof}
	(1): The containment ``$\supseteq$'' follows from the proof of Lemma~\ref{lem:indmapnum}.
	For ``$\subseteq$''  we observe that if $i^\ast(a) \in \mathrm{Ker}(\chi_{{\mathcal V}})$, then for every $b \in  K_0({\mathcal I})$ we have that $\chi(i^{\ast}a,i^\ast b)=0$. But as $i^\ast$ is the inclusion of a full subcategory,
	the same equation holds in  $K_0({\mathcal I})$.
	
	(2): Again, follows from the proof Lemma~\ref{lem:indmapnum}. 

	(3): Let $a \in \mathrm{Ker}(\chi_{{\mathcal V}/{\mathcal I} })$ and let $b:=q_{\ast}(a)$. Then for any $c \in D_c({\mathcal V})$:
	\[ \chi(b,c)=\chi(q_{\ast}(a),c)=\chi(a,q^{\ast}(c))=0. \]
	Hence, $b \in \mathrm{Ker}(\chi_{{\mathcal V}})$. 
 As $q^\ast \circ q_\ast$ is the identity of on objects of $D_c({\mathcal V}/{\mathcal I})$, we have that $q^\ast(b)=q^\ast(q_{\ast}(a))=a$.

\end{proof}

As mentioned in the Section~\ref{sec:intro}, there is an exact sequence of the ordinary Grothendieck groups \cite[Proposition~VIII.3.1.]{grothendieck1977cohomologie}:
\begin{equation}  
\label{eq:kgrpseq2}
K_0({\mathcal I}) \xrightarrow{i^\ast} K_0({\mathcal V}) \xrightarrow{q^\ast} K_0({\mathcal V}/{\mathcal I})  \to 0.
\end{equation}
We now descend this sequence to numerical Grothendieck groups. 

\begin{Theorem}
\label{lem:numgrpseq}
Let ${\mathcal V}$ and let ${\mathcal I}$ be a strictly full dg subcategory of ${\mathcal V}$. Assume that the numerical Grothendieck group of ${\mathcal V}$, ${\mathcal I}$ and ${\mathcal V}/{\mathcal I}$ exist.
Suppose moreover that $q$ or, equivalently, $i$ preserves compactness.  
Then there is an exact sequence
\[\numGgp{ {\mathcal I}} \to \numGgp{ {\mathcal V}} \to \numGgp{{\mathcal V}/{\mathcal I}} \to 0.\]
\end{Theorem}
\begin{proof}
 
It follows from \eqref{eq:kgrpseq2} that $K_0({\mathcal V}/{\mathcal I})=K_0({\mathcal V}) / i^\ast(K_0({\mathcal I}))$. For short, denote 
\[\mathcal{K}_{{\mathcal V}} \coloneqq \mathrm{Ker}(\chi_{{\mathcal V}})\]
and similarly for ${\mathcal I}$ and ${\mathcal V}/{\mathcal I}$.
Then by Lemma~\ref{lem:kermaps}~(1) we have
\[ 
\begin{aligned}
\numGgp{ {\mathcal V}}/i^\ast(\numGgp{ {\mathcal I}}) & = (K_0({\mathcal V})/\mathcal{K}_{{\mathcal V}}) / (i^\ast( K_0({\mathcal I})/\mathcal{K}_{{\mathcal I}})\\ & =(K_0({\mathcal V})/\mathcal{K}_{{\mathcal V}}) / (i^\ast( K_0({\mathcal I})) /  i^\ast(\mathcal{K}_{{\mathcal I}})) \\
& \simeq (K_0({\mathcal V})/\mathcal{K}_{{\mathcal V}}) / (i^\ast( K_0({\mathcal I})) /  i^\ast(K_0({\mathcal I})) \cap \mathcal{K}_{{\mathcal V}} ).
\end{aligned}\]
As
\[ i^\ast( K_0({\mathcal I})) /  i^\ast(K_0({\mathcal I})) \cap \mathcal{K}_{{\mathcal V}}\simeq (i^\ast( K_0({\mathcal I})) \cdot \mathcal{K}_{{\mathcal V}}) / \mathcal{K}_{{\mathcal V}},\]
the above quotient is the same as
\[K_0({\mathcal V}) / (i^\ast( K_0({\mathcal I})) \cdot \mathcal{K}_{{\mathcal V}}).\]
Using similarly the identity
\[(i^\ast( K_0({\mathcal I})) \cdot \mathcal{K}_{{\mathcal V}})/i^\ast( K_0({\mathcal I}))\simeq  \mathcal{K}_{{\mathcal V}}/( i^\ast( K_0({\mathcal I})) \cap \mathcal{K}_{{\mathcal V}}) \]
we obtain that
\[\begin{aligned}
K_0({\mathcal V}) / (i^\ast( K_0({\mathcal I})) \cdot \mathcal{K}_{{\mathcal V}}) & \simeq (K_0({\mathcal V})/i^\ast(K_0({\mathcal I}))) / (\mathcal{K}_{{\mathcal V}}  /  i^\ast(K_0({\mathcal I})) \cap \mathcal{K}_{{\mathcal V}} ) \\
&= (K_0({\mathcal V}) / i^\ast( K_0({\mathcal I}))) /  q^\ast(\mathcal{K}_{{\mathcal V}}) \\
& = (K_0({\mathcal V}) / i^\ast( K_0({\mathcal I})))/\mathcal{K}_{{\mathcal V}/{\mathcal I}},
\end{aligned}
 \]
where at the second equality we used the definition of $q^\ast$, and at the last equality we used Lemma~\ref{lem:kermaps}~(3). Noting that 
the last line is $\numGgp{{\mathcal V}/{\mathcal I}}$, we deduce the statement.








\end{proof}


\begin{Corollary}
\label{lem:numgrpseq2}
Let ${\mathcal V}$ and let ${\mathcal I}$ be a strictly full dg subcategory of ${\mathcal V}$. Assume that the numerical Grothendieck group of ${\mathcal V}$, ${\mathcal I}$ and ${\mathcal V}/{\mathcal I}$ exist.
Suppose moreover that $K_0({\mathcal V}/{\mathcal I})$ has no torsion. 
Then there is an exact sequence
\[\numGgp{ {\mathcal I}} \to \numGgp{ {\mathcal V}} \to \numGgp{{\mathcal V}/{\mathcal I}} \to 0.\]
\end{Corollary}
\begin{proof}
	Let $a \in K_0({\mathcal V}/{\mathcal I})$, and let $A$ be the subgroup generated by $a$. If $A$ is of infinite order, then because the cokernel of $i^{\ast}$ on level of the K-groups $\mathrm{Coker}(i^{\ast}) \cong K_0({\mathcal V}/{\mathcal I})$, we must have that \[\mathrm{Im}(i^{\ast}) \cap (q^{\ast})^{-1}(A)=0.\] This means that $q^{\ast}$ maps $(q^{\ast})^{-1}(A)$ to $A$ bijectively. As the quotient functor $q: {\mathcal V} \to {\mathcal V}/{\mathcal I}$ is the identity on objects, the same is true for $q^\ast$. 
When $A$ is of infinite order, the above considerations mean that $q^\ast$ does not introduce any new relation for the elements representing the classes in $(q^{\ast})^{-1}(A)$. Such new relations could occur only from the new morphisms introduced when quotienting. Recall that morphisms in $D_c({\mathcal V}/{\mathcal I})$ are roofs $b \leftarrow c \rightarrow d$ such that $Cone(c \to b) \in D_c({\mathcal I})$, but these morphisms are only considered up to an equivalence relation. Because there are no new relations, we must have that there are no new morphisms in the quotient. That is, all the new roofs are killed by the equivalence relation when $b \in (q^{\ast})^{-1}(A)$ or $e \in (q^{\ast})^{-1}(A)$.
As a consequence,
\[ \Hom_{D_c({\mathcal V})}(b,c)=\Hom_{D_c({\mathcal V}/{\mathcal I})}(q^\ast b,q^\ast c) \]
for any $b \in (q^{\ast})^{-1}(A)$ and $c \in K_0({\mathcal V})$.
Therefore, if $a \in \mathrm{Ker}(\chi_{{\mathcal V}/{\mathcal I} })$, then $(q^{\ast})^{-1}(a) \in \mathrm{Ker}(\chi_{{\mathcal V}})$ because
\[ \chi((q^{\ast})^{-1}(a),c)=\chi(a,q^{\ast}c)=0 \]
for every $c \in K_0({\mathcal V})$.
In particular, if $K_0({\mathcal V}/{\mathcal I})$ has no torsion, then the conclusion of Lemma~\ref{lem:kermaps}~(3) holds, even without the assumption that $q$ or $i$ preserve compactness. The statement then follows as in Theorem~\ref{lem:numgrpseq}.
\end{proof}

\bibliographystyle{amsplain}
\bibliography{knumseq}
\end{document}